\newtheorem{theorem}{Theorem}[section]
\newtheorem{example}[theorem]{Example}
\newtheorem{proposition}[theorem]{Proposition}
\newtheorem{proposition and definition}[theorem]{Proposition and Definition}
\newtheorem{corollary}[theorem]{Corollary}
\newtheorem{remark}[theorem]{Remark}
\numberwithin{equation}{section}
\newcommand{\id}{{\rm id}}
\newcommand{\CA}{\mathcal{A}}
\newcommand{\BN}{\mathbb{N}}
\newcommand{\Id}{{\rm id}}
\newcommand{\CS}{\mathcal{S}}
\newcommand{\CT}{{\mathcal{T}}}
\begin{document}

\title[inductive limit]
{A construction of inductive limit \\for operator system}

\author{Jianze Li}

\address[Jianze Li]{Department of Mathematics, Tianjin University, Tianjin 300072, China}
\email{lijianze@tju.edu.cn}

\date{\today}

\keywords{operator system; inductive limit; nuclearity properties}

\thanks{This work was supported by the National
Natural Science Foundation of China (No.11601371).}

\subjclass[2000]{46L06; 46L07}

\begin{abstract}
In this paper,
we show a construction of inductive limit for operator system based on Archimedeanization.
This inductive limit may be not a closed operator system.
We prove that many nuclearity properties could be preserved by a special case of this inductive limit.
\end{abstract}

\maketitle

\section{Introduction}

Operator system has played an important role in $C^{\ast}$-algebra tensor product theory (see \cite{Effros-Ruan1,Murphy1,Paulsen1})
after its abstract characterization (see \cite{Choi-Effros1}).
Tensor product of operator systems was systematically studied in \cite{Kavruk-Paulsen1}.
Furthermore,
nuclearity properties of operator system were studied in \cite{Han-Paulsen1,Kavruk1,Kavruk-Paulsen1,Kavruk-Paulsen2}
and were applied to study the well known Kirchberg conjecture
(see \cite{Kirchiberg2}).

\medskip
Let $\CS$ and $\CT$ be operator systems and $\CS(\CS,\CT)$ be the set of unital completely positive maps.
A map $\varphi:\CS \rightarrow\CT$ is called a complete order monomorphism if it is a complete order isomorphism onto its image.
Let $\CS\otimes \CT$ be their algebraic tensor product.
Denote by $\CS\otimes_{min}\CT$ the operator system structure on $\CS\otimes \CT$
determined by the cones
\begin{align*}
M_{n}(\CS\otimes_{min}\CT)^+:=&\{(u_{i,j})\in M_n(\CS\otimes \CT): ((\varphi\otimes\psi)(u_{i,j}))_{i,j}\geq0\\
&\text{ for any}\  k,m\in\BN\text{ and}\ \varphi\in\CS(\CS,M_k), \psi\in\CS(\CT,M_m)\}.
\end{align*}
Then $min$ is injective in the sense that
$\CS_1\otimes_{min}\CT_1\subseteq \CS_2\otimes_{min}\CT_2$
whenever $\CS_1\subseteq \CS_2$ and $\CT_1\subseteq \CT_2$.
Let
\begin{align*}
D_n^{max}:=\{\alpha(P\otimes Q)\alpha^*: P\in M_l(\CS)^+, Q\in M_m(\CT)^+,
\alpha\in M_{n,lm},\ l,m\in\BN\}
\end{align*}
for all $n\in\BN$.
Denote by $\CS\otimes_{max}\CT$ the operator system structure on $\CS\otimes \CT$
determined by the cones
\begin{align*}
M_{n}(\CS\otimes_{max}\CT)^+:=&\{u\in M_n(\CS\otimes \CT): \varepsilon(1_\CS\otimes 1_\CT)_n+u\in D_n^{max}\ \text{ for any}\ \varepsilon>0\}.
\end{align*}
This Archimedeanization is a critical step in the construction of $max$ structure
(see \cite[Definition 5.4]{Kavruk-Paulsen1} and \cite[Proposition 3.16]{Paulsen-Todorov1}).

\medskip
Furthermore,
many other operator system structures on $\CS\otimes\CT$ were introduced,
including $e$, $el$, $er$ and $c$ (see \cite{Kavruk-Paulsen1}).
$el$ is left injective in the sense that
$\CS_1\otimes_{el}\CT\subseteq \CS_2\otimes_{el}\CT$
whenever $\CS_1\subseteq \CS_2$.
Similarly,
$er$ is right injective and $e$ is injective.
Let $\alpha,\beta\in\{min,e,el,er,c,max\}$ be two operator system structures.
We say $\beta$ is greater than $\alpha$,
denoted by $\alpha\leq\beta$,
if the identity map
is completely positive from $\CS\otimes_{\beta}\CT$ to $\CS\otimes_{\alpha}\CT$.
These structures can be summarized as follows:
$$min \leq e \leq el, er \leq c \leq max.$$
$\CS$ is called \emph{$(\alpha,\beta)$-nuclear},
if $\alpha\leq\beta$ and $\CS\otimes_{\alpha}\CT=\CS\otimes_{\beta}\CT$ for any operator system $\CT$.
More properties of these structures can be refer to \cite{Kavruk1,Kavruk-Paulsen1}.

The definition of inductive limit for closed operator system was introduced in \cite{Kirchiberg1} and
was used in \cite{Luthra01} to study the nuclearity properties of operator system.
In this paper,
we study a construction of inductive limit for arbitrary operator system.
This inductive limit may be not a closed operator system.
In section 2,
we show the detailed construction of this inductive limit based on Archimedeanization.
We prove the universal properties of this construction.
In particular,
if an operator system is the union of a sequence of increasing operator subsystems,
then it is the inductive limit.
In section 3,
we prove that all the nuclearity properties in \cite{Kavruk-Paulsen1} can be preserved by this special case of inductive limit.
In section 4,
we present two examples.

\section{Inductive limit}

\medskip
Let $\{\CS_k:k\in\BN\}$ be a sequence of operator systems and
$\varphi_k\in\CS(\CS_k,\CS_{k+1})$ for all $k\in\BN$.
Then $\{\CS_k,\varphi_k: k\in\BN\}$ is called an inductive sequence.
Let $\Pi_{k\in\BN}\CS_k$ be the direct sum of vector spaces.
Denote by
$$S' = \{(x_k),\ \text{there exists}\ k_0\ \text{such that}\ x_{k+1}=\varphi_k(x_k)\ \text{for all}\ k>k_0\}\subseteq\Pi_{k\in\BN}\CS_k.$$
Then $S'$ is a $\ast$-vector space with the canonical involution map.
Let $S'_{h}$ be the set of hermitian elements in $S'$.
Define
\begin{align*}
D_1 :=&\{(x_k)\in S'_h:\text{there exists $k_0$ such that $x_{k+1}=\varphi_k(x_k)$ and $x_k\in \CS_k^+$ for all $k>k_0$}\},\\
D_n :=&\{((u_{i,j}^{(k)}))\in M_n(S')_h: \text{there exists $k_0$ such that}\\
&\text{$(u_{i,j}^{(k+1)})=(\varphi_k(u_{i,j}^{(k)}))$ and $(u_{i,j}^{(k)})\in M_n(\CS_k)^{+}$ for all $k>k_0$}\}\subseteq M_n(S')_h
\end{align*}
for all $n\in\BN$.
Let $K\subseteq S'$ be the set of $(x_k)$ such that there exists $k_0$ for which $x_{k}=0$ for all $k>k_0$.
Let $S:=S'/K$ be the quotient vector space with $q:S'\rightarrow S'/K$ as the canonical quotient map.
We define
\begin{align*}
&S^{+}:=\{u\in S_h:\text{there exists $a\in D_1$ such that $u=q(a)$}\},\\
&M_n(S)^+:=\{(u_{i,j})\in M_n(S)_h:\text{there exists $(a_{i,j})\in D_n$ such that $(u_{i,j})=q_n((a_{i,j}))$}\}
\end{align*}
for all $n\in\BN$.
Note that for any $(u_{i,j})\in M_n(S)_h$,
there exists $(b_{i,j})\in M_n(S')_h$ such that $(u_{i,j})=q_n((b_{i,j}))$.
Then $S$ is a matrix ordered $\ast$-vector space with $(1_{\CS_k})+K\in S$ as a matrix ordered unit.
Let $\CS = \text{Arch}(S)$ be the Archimedeanization of $S$.
Then $\CS$ is called the inductive limit of inductive sequence $\{\CS_k,\varphi_k: k\in\BN\}$,
denoted by $\CS=\lim\limits_{\longrightarrow}\CS_k$ or simply $\CS_k\rightarrow\CS$.

\medskip
Now we would like to show some notations before the universal properties.
For any $p,q\in\BN$ with $q>p$,
we denote by $\varphi_{p,q}$ the composition of $\{\varphi_{k}:p\leq k<q\}$.
For any $x\in\CS_k$, we denote by $\hat{x}$ the sequence $(x_k)$
with $x_l=0$ for $l<k$,
$x_k=x$ and $x_l=\varphi_{k,l}(x)$ for $l>k$.
Let $\varphi^{(k)}:\CS_k\rightarrow \CS$ be the map which sends $x\in\CS_k$ to $\hat{x}+K$.
Then $\varphi^{(k)}$ is unital completely positive and
the following diagram commutes for all $k\in\BN$.
$$\xymatrix{
\CS_k \ar[rr]^{\varphi_k}\ar[dr]_{\varphi^{(k)}} & & \CS_{k+1} \ar[dl]^{\varphi^{(k+1)}} \\
& \CS &
}$$

\begin{remark}\label{remark-1}
(i) It is clear that $\CS$ is the union of the increasing sequence of operator subsystems $\varphi^{(k)}(\CS_k)$,
that is
$\CS = \cup_{k\in\BN}\varphi^{(k)}(\CS_k).$
This is different from the inductive limit for closed operator system in \cite{Kirchiberg1,Luthra01},
which is the closure of union.\\
(ii) $\varphi^{(k)}(x)\in \CS^+$ if there exist $l>k$ such that $\varphi_{k,l}(x)\in\CS_{l}^+$.\\
(iii) For any $v\in M_n(S)^+$,
we can find $k_0\in\BN$ such that for any $k>k_0$,
there exists $u\in M_n(\CS_k)^+$ such that $\varphi^{(k)}_n(u)=v$.
\end{remark}

\begin{remark}\label{remark-inclusion}
Suppose that $\CS$ is an operator system and $\{\CS_k,k\in\BN\}$
is an increasing sequence of operator subsystems.
If $\CS_k\subseteq\CS_{k+1}$ with the inclusion map $i_k$ for $k\in\BN$
and $\CS = \cup_{k\in\BN}\CS_k$,
then  $\CS=\lim\limits_{\longrightarrow}\CS_k$.
For convenience,
we denote by $\CS_k\xrightarrow{i}\CS$ in this case.
\end{remark}

\begin{remark}
It is not hard to see that any separable operator system is the inductive limit
of an increasing sequence of finite-dimensional operator systems.
This shows the importance of finite-dimensional operator system
in operator system theory.
\end{remark}

\begin{proposition}\label{proposition-property}
Let $\CS$ be the inductive limit of inductive sequence $\{\CS_k,\varphi_k: k\in\BN\}$.\\
(i) Let $x\in\CS_k, y\in\CS_l$ with $k,l\in\BN$.
Then $\varphi^{(k)}(x)=\varphi^{(l)}(y)$ if and only if there exists $p>k,l$ such that $\varphi_{k,p}(x)=\varphi_{l,p}(y)$.\\
(ii) If $\CT$ is an operator system and there exist unital completely positive maps $\psi^{(k)}:\CS_k\rightarrow\CT$
such that the following left diagram commutes for all $k\in\BN$.
$$\xymatrix{
\CS_k \ar[rr]^{\varphi_k}\ar[dr]_{\psi^{(k)}} & & \CS_{k+1} \ar[dl]^{\psi^{(k+1)}}
&&& \CS_k \ar[rr]^{\varphi^{(k)}}\ar[dr]_{\psi^{(k)}} & & \CS \ar@{.>}[dl]^{\Psi} \\
& \CT & &&& & \CT &
}$$
Then there exists a unique unital completely positive map $\Psi:\CS\rightarrow\CT$
such that the above right diagram commutes for all $k\in\BN$.
\end{proposition}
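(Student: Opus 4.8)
For part (i) the plan is simply to unwind the definitions of $\varphi^{(k)}$ and $K$. By construction $\varphi^{(k)}(x)=\hat{x}+K$ and $\varphi^{(l)}(y)=\hat{y}+K$, so $\varphi^{(k)}(x)=\varphi^{(l)}(y)$ is equivalent to $\hat{x}-\hat{y}\in K$, i.e.\ to the existence of an index $k_0$ beyond which the coordinates of $\hat{x}$ and $\hat{y}$ agree. For $m>\max\{k,l\}$ those coordinates are $\varphi_{k,m}(x)$ and $\varphi_{l,m}(y)$, so choosing any $p>\max\{k,l,k_0\}$ yields $\varphi_{k,p}(x)=\varphi_{l,p}(y)$ and proves the forward direction. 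Conversely, given $p>k,l$ with $\varphi_{k,p}(x)=\varphi_{l,p}(y)$, I would apply $\varphi_{p,m}$ and use the composition rule $\varphi_{p,m}\circ\varphi_{k,p}=\varphi_{k,m}$ to propagate the equality to all $m\ge p$; then $\hat{x}-\hat{y}$ vanishes from index $p$ on, hence lies in $K$.

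For part (ii), I would use Remark~\ref{remark-1}(i) that $\CS=\cup_k\varphi^{(k)}(\CS_k)$, so every element of $\CS$ is of the form $\varphi^{(k)}(x)$, and define
\[
\Psi(\varphi^{(k)}(x)):=\psi^{(k)}(x).
\]
Well-definedness is exactly where part (i) enters: if $\varphi^{(k)}(x)=\varphi^{(l)}(y)$, pick $p>k,l$ with $\varphi_{k,p}(x)=\varphi_{l,p}(y)$, and iterate the left-hand commuting triangle to get $\psi^{(k)}=\psi^{(p)}\circ\varphi_{k,p}$ and $\psi^{(l)}=\psi^{(p)}\circ\varphi_{l,p}$; then $\psi^{(k)}(x)=\psi^{(p)}(\varphi_{k,p}(x))=\psi^{(p)}(\varphi_{l,p}(y))=\psi^{(l)}(y)$. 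Linearity would follow by representing any two elements at a common index $p$ and invoking linearity of $\psi^{(p)}$, and unitality is immediate since $\Psi(\varphi^{(k)}(1_{\CS_k}))=\psi^{(k)}(1_{\CS_k})=1_\CT$.

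The main obstacle will be complete positivity, and the delicate point is the Archimedeanization $\CS=\text{Arch}(S)$: a positive element of $M_n(\CS)$ need not lie in the cone $M_n(S)^+$ but only in its Archimedean closure. I would handle this in two steps. First, for $v\in M_n(S)^+$, Remark~\ref{remark-1}(iii) produces an index $k$ and $u\in M_n(\CS_k)^+$ with $\varphi^{(k)}_n(u)=v$, whence $\Psi_n(v)=\psi^{(k)}_n(u)\ge 0$ because $\psi^{(k)}$ is completely positive. Second, for a general $v\in M_n(\CS)^+$, the definition of the Archimedeanization gives $v+\varepsilon e_n\in M_n(S)^+$ for every $\varepsilon>0$, where $e_n$ is the matrix order unit of $\CS$; applying the first step together with unitality yields $\Psi_n(v)+\varepsilon f_n\ge 0$ for all $\varepsilon>0$, with $f_n$ the matrix order unit of $\CT$, and the Archimedean property of the operator system $\CT$ then forces $\Psi_n(v)\ge 0$. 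If the Archimedeanization additionally requires a preliminary quotient by its null space, one checks that a unital matrix-positive map annihilates that null space (again by the Archimedean property of $\CT$), so $\Psi$ descends; equivalently one may simply invoke the universal property of $\text{Arch}$ from \cite{Paulsen-Todorov1}.

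Finally, for uniqueness I would observe that commutativity of the right-hand diagram forces $\Psi\circ\varphi^{(k)}=\psi^{(k)}$ for every $k$, which determines $\Psi$ on each $\varphi^{(k)}(\CS_k)$; since these subsets exhaust $\CS$ by Remark~\ref{remark-1}(i), the map $\Psi$ is unique.
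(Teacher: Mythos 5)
Your proposal is correct and follows essentially the same route as the paper: define $\Psi$ on $\CS=\cup_{k}\varphi^{(k)}(\CS_k)$ by $\Psi(\varphi^{(k)}(x))=\psi^{(k)}(x)$ (the paper phrases this as the eventually constant value of the sequence $\psi^{(k)}(x_k)$), verify positivity on the cones $M_n(S)^+$ of the pre-Archimedeanized space $S$, and then use the Archimedeanization $\CS=\text{Arch}(S)$ together with the Archimedean property of $\CT$ to conclude complete positivity. Your write-up is simply more detailed at the points the paper leaves terse (part (i), uniqueness, and the null-space/universal-property step of the Archimedeanization), but the underlying argument is the same.
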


\begin{proof}
We only prove (ii).
For $(x_k)+K\in\CS$,
the sequence $\{\psi^{(k)}(x_k): k\in\BN\}$
is a sequence in $\CT$ with constant element except finite positions.
Denote by $\lim\psi^{(k)}(x_k)$ the constant element.
It is clear that $\lim\psi^{(k)}(y_k)=\lim\psi^{(k)}(x_k)$,
if $(x_k)+K=(y_k)+K$.
Now we define $\Psi:\CS\rightarrow\CT$ sending $(x_k)+K\in\CS$ to $\lim\psi^{(k)}(x_k)$.
It is clear that the above right diagram commutes and $\Psi$ is unital.
If $(x_k)+K\in S^+$,
then $\lim\psi^{(k)}(x_k)$ is positive.
We can verify that $\Psi$ is completely positive similarly.
Note that $\CS$ is the Archimedeanizaiton of $S$.
We get that $\Psi$ is a unital completely positive map.
\end{proof}

\begin{proposition}
Let $\CS$ and $\CT$ be inductive limits of inductive sequences $\{\CS_k,\varphi_k: k\in\BN\}$ and $\{\CT_k,\psi_k: k\in\BN\}$,respectively.
Let $\varphi^{(k)}:\CS_k\rightarrow\CS$ and $\psi^{(k)}:\CT_k\rightarrow\CT$ be the canonical maps.
Suppose that there exist unital completely positive maps
$\pi_k:\CS_k\rightarrow\CT_{k}$ such that the following left diagram commutes for all $k\in\BN$.
$$\xymatrix{
\CS_k \ar[rr]^{\varphi_k}\ar[d]^{\pi_{k}} & & \CS_{k+1} \ar[d]^{\pi_{k+1}}
&&& \CS_k \ar[rr]^{\varphi^{(k)}}\ar[d]^{\pi_{k}} & & \CS \ar@{.>}[d]^{\pi} \\
\CT_k \ar[rr]^{\psi_k} & & \CT_{k+1}
&&& \CT_k \ar[rr]^{\psi^{(k)}} & & \CT
}$$
Then there exists a unique
$\pi\in\CS(\CS,\CT)$ such that the above right diagram commutes for all $k\in\BN$.
\end{proposition}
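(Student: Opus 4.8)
The plan is to reduce the statement entirely to the universal property already proved in Proposition \ref{proposition-property}(ii), applied to the target operator system $\CT$. The key idea is to manufacture, out of the given data, a compatible family of unital completely positive maps from the sequence $\{\CS_k,\varphi_k\}$ into the fixed operator system $\CT$, and then let the universal property produce the desired $\pi$ for free. Concretely, I would set
$$
\eta^{(k)} := \psi^{(k)}\circ\pi_k : \CS_k \longrightarrow \CT, \qquad k\in\BN.
$$
Each $\eta^{(k)}$ is a composition of unital completely positive maps ($\pi_k\in\CS(\CS_k,\CT_k)$ by hypothesis, and $\psi^{(k)}$ is unital completely positive by the construction of the inductive limit), hence $\eta^{(k)}\in\CS(\CS_k,\CT)$.

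The one computational step is to check that $\{\eta^{(k)}\}$ is compatible with the connecting maps $\varphi_k$, i.e.\ that the left triangle of Proposition \ref{proposition-property}(ii) commutes with $\CT$ in the apex: $\eta^{(k+1)}\circ\varphi_k=\eta^{(k)}$. This uses the two relations available to us. First, the commuting left square in the hypothesis gives $\pi_{k+1}\circ\varphi_k=\psi_k\circ\pi_k$. Second, the structural identity for the inductive limit $\CT$ (the commuting triangle exhibited in the construction, applied to the sequence $\{\CT_k,\psi_k\}$) gives $\psi^{(k+1)}\circ\psi_k=\psi^{(k)}$. Combining these,
$$
\eta^{(k+1)}\circ\varphi_k
=\psi^{(k+1)}\circ\pi_{k+1}\circ\varphi_k
=\psi^{(k+1)}\circ\psi_k\circ\pi_k
=\psi^{(k)}\circ\pi_k
=\eta^{(k)},
$$
as required.

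With the compatibility in hand, Proposition \ref{proposition-property}(ii), applied to the family $\{\eta^{(k)}\}$ and the operator system $\CT$, yields a unique unital completely positive map $\Psi:\CS\rightarrow\CT$ with $\Psi\circ\varphi^{(k)}=\eta^{(k)}=\psi^{(k)}\circ\pi_k$ for all $k$. Setting $\pi:=\Psi$, this is exactly the commutativity of the right diagram, and $\pi\in\CS(\CS,\CT)$. Uniqueness is inherited directly from the uniqueness clause of Proposition \ref{proposition-property}(ii): any $\pi'\in\CS(\CS,\CT)$ making the right diagram commute satisfies $\pi'\circ\varphi^{(k)}=\eta^{(k)}$, so $\pi'=\Psi=\pi$.

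I do not expect a genuine obstacle here: the content of the argument is organizational rather than technical, since the hard analytic work (passing to the Archimedeanization and checking that the limiting map is unital completely positive) is already absorbed into Proposition \ref{proposition-property}(ii). The only point that requires care is to invoke \emph{both} commuting relations in the right order when verifying the cocone condition; once $\{\eta^{(k)}\}$ is seen to be a compatible cocone over $\{\CS_k,\varphi_k\}$ with apex $\CT$, the conclusion is immediate.
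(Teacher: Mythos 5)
Your proof is correct, but it takes a genuinely different route from the paper's. The paper argues by direct construction: writing $\CT=\text{Arch}(T)$ with $T=T'/N$, it defines $\pi$ on representatives by $(x_k)+K\mapsto(\pi_k(x_k))+N$ (the hypothesis squares guarantee that the sequence $(\pi_k(x_k))$ is eventually compatible, hence lies in $T'$), then checks well-definedness, commutativity and uniqueness by hand, and finally passes complete positivity of $\pi:S\rightarrow T$ through the Archimedeanization to conclude $\pi\in\CS(\CS,\CT)$. You instead manufacture the cocone $\eta^{(k)}=\psi^{(k)}\circ\pi_k$, verify the compatibility $\eta^{(k+1)}\circ\varphi_k=\eta^{(k)}$ from the hypothesis square $\pi_{k+1}\circ\varphi_k=\psi_k\circ\pi_k$ together with the structural identity $\psi^{(k+1)}\circ\psi_k=\psi^{(k)}$, and then quote the universal property of Proposition \ref{proposition-property}(ii) to obtain existence and uniqueness in one stroke. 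Your reduction buys economy and rigor: the delicate points --- well-definedness on the quotient and the fact that positivity on the pre-limit structure survives Archimedeanization --- are handled once and for all inside Proposition \ref{proposition-property}(ii) and never re-argued, whereas the paper's proof is terse precisely at its last step (``since $\pi:S\rightarrow T$ is completely positive, then $\pi:\CS\rightarrow\CT$ is also''), which implicitly appeals to functoriality of the Archimedeanization. What the paper's construction buys in exchange is an explicit coordinatewise formula for $\pi$, making visible that $\pi$ is implemented by the maps $\pi_k$ on representatives; by the uniqueness clause the two maps necessarily coincide.
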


\begin{proof}
Denote by $T=T'/N$ such that
$\CT = \text{Arch}(T)$.
For any $(x_k)+K\in\CS$,
we have that $(\pi_k(x_k))+N\in\CT$.
It is clear that
$(\pi_k(x_k))+N=(\pi_k(y_k))+N$
if $(x_k)+K=(y_k)+K$.
Now we define $\pi:\CS\rightarrow\CT$ sending
$(x_k)+K$ to $(\pi_k(x_k))+N$.
Then the above right diagram commutes and $\pi$ is unique.
Since $\pi:S\rightarrow T$ is completely positive,
then $\pi:\CS\rightarrow\CT$ is also.
\end{proof}

\section{Nuclearity properties}

In this section,
we prove that all the nuclearity properties in \cite{Kavruk-Paulsen1} can be preserved
by the inductive limit in the case of Remark \ref{remark-inclusion}.
Some similar results were proved in \cite{Luthra01} based on different inductive limit definitions and different methods.

\begin{proposition}\label{proposition-max}
Let $\CT$ be an operator system and $\CS_k\xrightarrow{i}\CS$.
Then $\CS\otimes_{max}\CT$ is the inductive limit of
$\{\CS_k\otimes_{max}\CT,i_k\otimes id_{\CT}: k\in\BN\}$.
\end{proposition}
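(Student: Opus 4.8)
The plan is to identify $\CS\otimes_{max}\CT$ with the abstract inductive limit $\CR:=\lim\limits_{\longrightarrow}(\CS_k\otimes_{max}\CT)$ by constructing a unital complete order isomorphism between them that intertwines the canonical maps. Write $i_k:\CS_k\hookrightarrow\CS_{k+1}$ for the connecting inclusions and $j_k:\CS_k\hookrightarrow\CS$ for the inclusions into $\CS$, so that $j_{k+1}\circ i_k=j_k$, and let $\mu^{(k)}:\CS_k\otimes_{max}\CT\to\CR$ be the canonical maps of the inductive limit. Since tensoring unital completely positive maps again gives a completely positive map for the $max$ structure (functoriality of $max$, see \cite{Kavruk-Paulsen1}), each $i_k\otimes\id_{\CT}$ is unital completely positive, so the inductive sequence $\{\CS_k\otimes_{max}\CT,\,i_k\otimes\id_{\CT}\}$, and hence $\CR$, is well defined. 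By the same functoriality each $j_k\otimes\id_{\CT}:\CS_k\otimes_{max}\CT\to\CS\otimes_{max}\CT$ is unital completely positive, and these are compatible with the connecting maps because $(j_{k+1}\otimes\id_{\CT})\circ(i_k\otimes\id_{\CT})=j_k\otimes\id_{\CT}$. Proposition \ref{proposition-property}(ii) then yields a unique unital completely positive map $\Psi:\CR\to\CS\otimes_{max}\CT$ with $\Psi\circ\mu^{(k)}=j_k\otimes\id_{\CT}$ for all $k$.

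The main work is to produce a unital completely positive inverse $\Phi:\CS\otimes_{max}\CT\to\CR$, for which I would invoke the universal property of $max$: a unital bilinear map $\theta:\CS\times\CT\to\CR$ extends to a unital completely positive map on $\CS\otimes_{max}\CT$ as soon as it is jointly completely positive, i.e. carries pairs of positive matrices to positive matrices \cite{Kavruk-Paulsen1}. Define $\theta(x,y)=\mu^{(k)}(x\otimes y)$, where $k$ is any index with $x\in\CS_k$ (possible since $\CS=\cup_k\CS_k$); this is independent of the choice of $k$ because $\mu^{(k+1)}\circ(i_k\otimes\id_{\CT})=\mu^{(k)}$, and it is clearly bilinear and unital. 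To check joint complete positivity, fix $P\in M_l(\CS)^+$ and $Q\in M_m(\CT)^+$. Since $P$ has only finitely many entries and $\CS=\cup_k\CS_k$ is an increasing union, all entries of $P$ lie in a single $\CS_{k_0}$, and because $\CS_{k_0}\subseteq\CS$ is an operator subsystem (a complete order embedding, by the hypothesis $\CS_k\xrightarrow{i}\CS$) we may conclude $P\in M_l(\CS_{k_0})^+$. Hence $P\otimes Q$ is already positive at stage $k_0$, and applying the completely positive map $\mu^{(k_0)}$ shows that the matrix $(\theta(P_{ab},Q_{cd}))$ is positive in $M_{lm}(\CR)$. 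Thus $\theta$ is jointly completely positive, and the universal property gives a unital completely positive $\Phi:\CS\otimes_{max}\CT\to\CR$ with $\Phi(x\otimes y)=\mu^{(k)}(x\otimes y)$.

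Finally I would verify that $\Psi$ and $\Phi$ are mutually inverse. On a simple tensor, $\Psi(\Phi(x\otimes y))=\Psi(\mu^{(k)}(x\otimes y))=(j_k\otimes\id_{\CT})(x\otimes y)=x\otimes y$, so $\Psi\circ\Phi$ is the identity on the spanning set of simple tensors, hence everywhere. Conversely $\Phi(\Psi(\mu^{(k)}(x\otimes y)))=\Phi(x\otimes y)=\mu^{(k)}(x\otimes y)$; since $\CR=\cup_k\mu^{(k)}(\CS_k\otimes_{max}\CT)$ by Remark \ref{remark-1}(i) and simple tensors span each $\CS_k\otimes_{max}\CT$, the elements $\mu^{(k)}(x\otimes y)$ span $\CR$, so $\Phi\circ\Psi=\id$ as well. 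Therefore $\Psi$ is a unital complete order isomorphism satisfying $\Psi\circ\mu^{(k)}=j_k\otimes\id_{\CT}$, which identifies $\CS\otimes_{max}\CT$ with $\CR$ as the inductive limit of $\{\CS_k\otimes_{max}\CT,\,i_k\otimes\id_{\CT}\}$.

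I expect the crux to be the joint-complete-positivity check for $\theta$: it is precisely here that the hypothesis $\CS_k\xrightarrow{i}\CS$, guaranteeing that every positive matrix over $\CS$ already occurs as a positive matrix at some finite stage, is indispensable. Since $max$ is not injective in general, this is the step where the argument would break down without genuine complete order embeddings, and it is the part I would write out most carefully.
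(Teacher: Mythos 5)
Your proof is correct, and its first half coincides with the paper's: both obtain $\Psi$ by applying Proposition \ref{proposition-property}(ii) to the compatible family $j_k\otimes\id_{\CT}$. The divergence is in showing that $\Psi$ is a complete order isomorphism. The paper does this by direct cone-chasing: it verifies bijectivity by hand (every element of the limit, and of the kernel of $\Psi$, lives at a finite stage), then proves $D_n^{max}\subseteq\Psi_n\bigl(M_n(\lim\limits_{\longrightarrow}(\CS_k\otimes_{max}\CT))^+\bigr)$ using the same key fact you isolate --- every $P\in M_l(\CS)^+$ equals $i^{(k)}_l(P')$ for some $P'\in M_l(\CS_k)^+$ --- and finally passes from $D_n^{max}$ to its Archimedeanization $M_n(\CS\otimes_{max}\CT)^+$. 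You instead manufacture a unital completely positive inverse $\Phi$ by linearizing the jointly completely positive bilinear map $\theta(x,y)=\mu^{(k)}(x\otimes y)$ through the universal property of $\otimes_{max}$ from \cite{Kavruk-Paulsen1}, and read off the complete order isomorphism from the pair of mutually inverse completely positive bijections. Your route buys automatic bijectivity and absorbs the Archimedeanization into the already-established universal property of $max$; this is cleaner than the paper's final step, which is terse and implicitly needs an $\varepsilon$-perturbation argument (that the Archimedean closure of $D_n^{max}$ stays inside the image cone because the limit cone is itself Archimedean and $\Psi$ is a bijection). The paper's route, in exchange, is self-contained at the level of the definition of the max cones and never invokes the linearization theorem. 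Both proofs turn on the identical crux you flag at the end: positivity of a matrix over $\CS=\cup_k\CS_k$ is witnessed at a finite stage because each inclusion $\CS_k\subseteq\CS$ is a complete order embedding.
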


\begin{proof}
Note that $i^{(k)}\otimes\Id_{\CT}:\CS_k\otimes_{max}\CT\rightarrow\CS\otimes_{max}\CT$ is completely positive for any $k\in\BN$.
There exists a unique unital completely positive map $\Psi:\lim\limits_{\longrightarrow}(\CS_k\otimes_{max}\CT)\rightarrow\CS\otimes_{max}\CT$
such that the following diagram commutes.

$$\xymatrix{
& & \CS\otimes_{max}\CT  &
& & \frac{\Pi\CS_k}{K_0}\otimes\CT   \\
\CS_k\otimes_{max}\CT       \ar[rr]^{\Phi^{(k)}}     \ar[rru]^{i^{(k)}\otimes\Id_{\CT}}
& & \lim\limits_{\longrightarrow}(\CS_k\otimes_{max}\CT) \ar@{.>}[u]_{\Psi}   &
\CS_k\otimes\CT       \ar[rr]^{\Phi^{(k)}}     \ar[rru]^{i^{(k)}\otimes\Id_{\CT}}
& & \frac{\Pi(S_k\otimes\CT)}{K} \ar@{.>}[u]_{\Psi}
}$$

It is clear that $\Psi$ is surjective.
Now we show that $\Psi$ is injective.
Let $v\in\lim\limits_{\longrightarrow}(\CS_k\otimes_{max}\CT)$ with $\Psi(v)=0$.
There exists $u\in\CS_k\otimes_{max}\CT$ for large enough $k\in\BN$ such that $\Phi^{(k)}(u)=v$
and thus $(i^{(k)}\otimes\Id_{\CT})(u)=0$, that is u=0.
Therefore, v=0 and thus $\Psi$ is injective.

Now we prove that $\Psi$ is a complete order isomorphism.
Note that  $M_{n}(\CS\otimes_{max}\CT)^{+}$ is the Archimedeanization of
$$D_{n}^{max}=\{\alpha(P\otimes Q)\alpha^{\ast}:P\in M_{l}(\CS)^{+}, Q\in M_{m}(\CT)^{+}, \alpha\in M_{n,lm}, l,m\in\BN\}$$
and for any $P\in M_{l}(\CS)^{+}$,
there exists large enough $k\in\BN$ and $P'\in M_{l}(\CS_k)^{+}$ such that $i^{(k)}_{l}(P')=P$.
Then we get that
$$D_{n}^{max}\subseteq \bigcup_{k\in\BN}(i^{(k)}\otimes\Id_{\CT})_{n}(M_{n}(\CS_k\otimes_{max}\CT)^{+})$$
for any $n\in\BN$.
Since the diagram commutes,
we have
$D_{n}^{max}\subseteq \Psi_{n}(M_{n}(\lim(\CS_k\otimes_{max}\CT))^{+})$
for any $n\in\BN$.
Since $\Psi$ is completely positive,
we get that
$$M_{n}(\CS\otimes_{max}\CT)^{+}=\Psi_{n}(M_{n}(\lim(\CS_k\otimes_{max}\CT))^{+})$$
for any $n\in\BN$.
\end{proof}

\begin{corollary}\label{corollary-er-c}
Let $\CT$ be an operator system and $\CS_k\xrightarrow{i}\CS$.
Then \\
(i) $\CS\otimes_{er}\CT$ is the inductive limit of
$\{\CS_k\otimes_{er}\CT,i_k\otimes id_{\CT}: k\in\BN\}$.\\
(ii) $\CS\otimes_{c}\CT$ is the inductive limit of
$\{\CS_k\otimes_{c}\CT,i_k\otimes id_{\CT}: k\in\BN\}$.
\end{corollary}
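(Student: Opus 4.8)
The plan is to deduce both parts from Proposition \ref{proposition-max} by realizing $\otimes_{er}$ and $\otimes_{c}$ as operator subsystems of a $\max$-tensor product in which \emph{only the right factor} $\CT$ has been enlarged, while $\CS$ (and each $\CS_k$) is left untouched. Writing $I(\CT)$ for an injective envelope of $\CT$ and $C^{*}_{u}(\CT)$ for the universal $C^{*}$-algebra of $\CT$, I would invoke the standard realizations from \cite{Kavruk-Paulsen1},
\[
\CS\otimes_{er}\CT\hookrightarrow\CS\otimes_{max}I(\CT)
\quad\text{and}\quad
\CS\otimes_{c}\CT\hookrightarrow\CS\otimes_{max}C^{*}_{u}(\CT),
\]
which are complete order embeddings with $M_n(\CS\otimes_{er}\CT)^{+}=M_n(\CS\otimes_{max}I(\CT))^{+}\cap M_n(\CS\otimes\CT)$, and likewise for $c$; the same identities hold at each level $k$ with $\CS$ replaced by $\CS_k$. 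Since these enlarge only the right factor, Proposition \ref{proposition-max} applies verbatim to $I(\CT)$ and to $C^{*}_{u}(\CT)$, giving $\CS\otimes_{max}I(\CT)=\lim\limits_{\longrightarrow}(\CS_k\otimes_{max}I(\CT))$ and $\CS\otimes_{max}C^{*}_{u}(\CT)=\lim\limits_{\longrightarrow}(\CS_k\otimes_{max}C^{*}_{u}(\CT))$ as complete order isomorphisms.

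I will spell out (i); part (ii) is identical with $I(\CT)$ replaced by $C^{*}_{u}(\CT)$. As in the proof of Proposition \ref{proposition-max}, the maps $i^{(k)}\otimes\Id_{\CT}:\CS_k\otimes_{er}\CT\rightarrow\CS\otimes_{er}\CT$ are unital completely positive and compatible, so Proposition \ref{proposition-property}(ii) yields a unique unital completely positive $\Psi:\lim\limits_{\longrightarrow}(\CS_k\otimes_{er}\CT)\rightarrow\CS\otimes_{er}\CT$; since $\CS\otimes\CT=\cup_k(\CS_k\otimes\CT)$ and each $i^{(k)}\otimes\Id_{\CT}$ is injective, the same reasoning as in the $\max$ case shows $\Psi$ is a linear bijection. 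It remains to check that $\Psi^{-1}$ is completely positive. Fix $u\in M_n(\CS\otimes_{er}\CT)^{+}$, so $u\in M_n(\CS\otimes_{max}I(\CT))^{+}$, and choose $k_0$ with $u\in M_n(\CS_{k_0}\otimes\CT)$. For $\varepsilon>0$, Proposition \ref{proposition-max} together with Remark \ref{remark-1}(iii), applied to the right factor $I(\CT)$, produces for all large $k$ a positive element of $M_n(\CS_k\otimes_{max}I(\CT))^{+}$ mapping onto $u+\varepsilon(1_{\CS}\otimes 1_{\CT})_{n}$; but $u+\varepsilon(1_{\CS}\otimes 1_{\CT})_{n}\in M_n(\CS_k\otimes\CT)$ and $i^{(k)}\otimes\Id_{\CT}$ is injective, so that element must equal $u+\varepsilon(1_{\CS}\otimes 1_{\CT})_{n}$ itself. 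Hence $u+\varepsilon(1_{\CS}\otimes 1_{\CT})_{n}\in M_n(\CS_k\otimes_{max}I(\CT))^{+}\cap M_n(\CS_k\otimes\CT)=M_n(\CS_k\otimes_{er}\CT)^{+}$, so its image is positive in the inductive limit; letting $\varepsilon\to 0$ and using that the limit is an operator system (hence Archimedean) gives $\Psi^{-1}(u)\geq 0$.

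The main obstacle is securing the \emph{one-sided} realizations above, and in particular recognizing that for $\otimes_{c}$ the correct right enlargement is the universal $C^{*}$-algebra $C^{*}_{u}(\CT)$ (using that $\otimes_{c}$ agrees with $\otimes_{max}$ when one factor is a $C^{*}$-algebra), not an injective envelope. The usual symmetric description of $\otimes_{c}$ enlarges both factors, and enlarging the \emph{left} factor to $I(\CS)$ or $C^{*}_{u}(\CS)$ would be fatal: the envelope of the union $\CS=\cup_k\CS_k$ is in general neither the union nor the inductive limit of the envelopes of the $\CS_k$, so Proposition \ref{proposition-max} could not be applied. Once positivity is routed through a $\max$-tensor product that touches only the right factor, the descent to level $k$ is forced by injectivity of $i^{(k)}\otimes\Id_{\CT}$ and is routine. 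I would remark, finally, that no such argument is needed for $\otimes_{el}$: since $el$ is left injective, each $\CS_k\otimes_{el}\CT$ is already an operator subsystem of $\CS\otimes_{el}\CT$ and the conclusion is immediate from Remark \ref{remark-inclusion}; it is precisely the failure of left injectivity for $er$ and $c$ that makes the detour through Proposition \ref{proposition-max} necessary.
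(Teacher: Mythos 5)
Your proposal is correct and follows essentially the same route as the paper: the paper's (one-line) proof cites exactly the ingredients you use, namely Proposition \ref{proposition-max} together with the Kavruk--Paulsen--Todorov--Tomforde realizations of $\otimes_{er}$ and $\otimes_{c}$ inside a $\max$-tensor product whose \emph{right} factor is enlarged (to $I(\CT)$ and $C^{*}_{u}(\CT)$ respectively). Your write-up simply supplies the descent details (injectivity of $i^{(k)}\otimes\Id_{\CT}$, the $\varepsilon$-perturbation to handle the Archimedeanization) that the paper leaves implicit.
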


\begin{proof}
The proof is based on Proposition \ref{proposition-max} and \cite[Theorem 6.4,Theorem 7.5]{Kavruk-Paulsen1}.
\end{proof}

\begin{remark}\label{remark-03}
Let $\CS$ be an operator system and $\CT_k\xrightarrow{i}\CT$.
By the similar methods in Proposition \ref{proposition-max} and Corollary \ref{corollary-er-c},
we get\\
(i) $\CS\otimes_{max}\CT$ is the inductive limit of
$\{\CS\otimes_{max}\CT_{k},id_{\CS}\otimes i_k: k\in\BN\}$.\\
(ii) $\CS\otimes_{c}\CT$ is the inductive limit of
$\{\CS\otimes_{c}\CT_k,id_{\CS}\otimes i_k: k\in\BN\}$.\\
(iii) $\CS\otimes_{el}\CT$ is the inductive limit of
$\{\CS\otimes_{el}\CT_k,id_{\CS}\otimes i_k: k\in\BN\}$.
\end{remark}

\begin{remark}\label{remark-04}
(i) Let $\alpha\in\{min,e,el\}$ and $\CS_k\xrightarrow{i}\CS$.
It is clear that $\CS\otimes_{\alpha}\CT$ is the inductive limit of
$\{\CS_k\otimes_{\alpha}\CT,i_k\otimes id_{\CT}:k\in\BN\}$,
since $\alpha$ is left injective.\\
(ii) Let $\alpha\in\{min,e,er\}$ and $\CT_k\xrightarrow{i}\CT$.
It is clear that $\CS\otimes_{\alpha}\CT$ is the inductive limit of
$\{\CS\otimes_{\alpha}\CT_k,id_{\CS}\otimes i_k:k\in\BN\}$,
since $\alpha$ is right injective.
\end{remark}

\begin{remark}\label{remark-02}
Let $\alpha,\beta\in\{min,e,el,er,c,max\}$ and $\alpha\leq\beta$.
Note that Remark \ref{remark-03} and \ref{remark-04}.
The following are equivalent.\\
(i) $\CS$ is $(\alpha,\beta)$-nuclear.\\
(ii) $\CS\otimes_{\beta}\CT_k\rightarrow\CS\otimes_{\alpha}\CT$
if $\CT_k\xrightarrow{i}\CT$.\\
(iii) $\CS\otimes_{\alpha}\CT_k\rightarrow\CS\otimes_{\beta}\CT$
if $\CT_k\xrightarrow{i}\CT$.
\end{remark}

\begin{theorem}\label{theorem-nuclear}
Let $\CS_k\xrightarrow{i}\CS$.
Suppose that $\alpha,\beta\in\{min,e,el,er,c,max\}$ and $\alpha\leq\beta$.\\
(i) $\CS$ is $(\alpha,\beta)$-nuclear if there exists $k_0$
such that $\CS_k$ is $(\alpha,\beta)$-nuclear for any $k>k_0$;
(ii) If $\beta=e$ or $el$,
then $\CS$ is $(\alpha,\beta)$-nuclear if and only if there exists $k_0$
such that $\CS_k$ is $(\alpha,\beta)$-nuclear for any $k>k_0$.
\end{theorem}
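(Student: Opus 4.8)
The plan is to reduce both parts to the fact, established in Proposition \ref{proposition-max}, Corollary \ref{corollary-er-c} and Remark \ref{remark-04}(i), that for every $\gamma\in\{min,e,el,er,c,max\}$ and every operator system $\CT$ the tensor product $\CS\otimes_{\gamma}\CT$ is canonically the inductive limit of $\{\CS_k\otimes_{\gamma}\CT,\ i_k\otimes\Id_{\CT}:k\in\BN\}$. Fixing $\CT$, both $\CS\otimes_{\alpha}\CT$ and $\CS\otimes_{\beta}\CT$ are therefore inductive limits of sequences that live on the \emph{same} underlying spaces $\CS_k\otimes\CT$ with the \emph{same} connecting maps $i_k\otimes\Id_{\CT}$; only the matrix cones differ.

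For part (i), I would observe that the inductive limit construction of Section 2 depends only on a cofinal tail of the sequence: the $\ast$-vector space $S'$, the subspace $K$, and the cones $D_n$ are all defined by conditions on the eventual behaviour of a sequence, so any finite initial segment is irrelevant. Since $\CS_k$ is $(\alpha,\beta)$-nuclear for $k>k_0$, we have $M_n(\CS_k\otimes_{\alpha}\CT)^{+}=M_n(\CS_k\otimes_{\beta}\CT)^{+}$ for all $k>k_0$ and all $n$, so the two inductive sequences agree beyond $k_0$. Hence their inductive limits coincide as operator systems on the common space, and tracing through the canonical identification of Proposition \ref{proposition-max} one checks that the identity map on $\CS\otimes\CT$ is a complete order isomorphism from $\CS\otimes_{\alpha}\CT$ onto $\CS\otimes_{\beta}\CT$. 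As $\CT$ was arbitrary, $\CS$ is $(\alpha,\beta)$-nuclear. Alternatively one can avoid the tail remark and argue directly: given $v\in M_n(\CS\otimes_{\alpha}\CT)^{+}$ and $\varepsilon>0$, use Remark \ref{remark-1}(iii) to pull $v+\varepsilon e$ (with $e$ the order unit) back to some $u\in M_n(\CS_k\otimes_{\alpha}\CT)^{+}$ with $k>k_0$, invoke nuclearity of $\CS_k$ to view $u$ as $\beta$-positive, push it forward by the unital completely positive map $i^{(k)}\otimes\Id_{\CT}$, and let $\varepsilon\to 0$ using that $M_n(\CS\otimes_{\beta}\CT)^{+}$ is Archimedean closed.

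For part (ii) the ``if'' direction is exactly (i), so only ``only if'' remains. The hypothesis $\beta\in\{e,el\}$ together with $\alpha\leq\beta$ forces $\alpha,\beta\in\{min,e,el\}$, and each of these structures is left injective; this is the single place where the restriction on $\beta$ is used. Consequently, for $\CS_k\subseteq\CS$ and any operator system $\CT$, both inclusions $\CS_k\otimes_{\alpha}\CT\hookrightarrow\CS\otimes_{\alpha}\CT$ and $\CS_k\otimes_{\beta}\CT\hookrightarrow\CS\otimes_{\beta}\CT$ are complete order embeddings, so the cone of each subsystem is the intersection of the cone of the ambient system with $M_n(\CS_k\otimes\CT)$. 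Assuming $\CS$ is $(\alpha,\beta)$-nuclear, i.e. $M_n(\CS\otimes_{\alpha}\CT)^{+}=M_n(\CS\otimes_{\beta}\CT)^{+}$, intersecting both sides with $M_n(\CS_k\otimes\CT)$ yields $M_n(\CS_k\otimes_{\alpha}\CT)^{+}=M_n(\CS_k\otimes_{\beta}\CT)^{+}$ for every $\CT$ and $n$. Thus every $\CS_k$ is $(\alpha,\beta)$-nuclear, which is in fact stronger than the required ``eventually''.

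I expect the main obstacle to be the rigorous justification of the step in (i) that the inductive limit is insensitive to a finite initial segment (equivalently, that \emph{eventual} nuclearity of the $\CS_k$ already suffices); making this precise, or else replacing it by the Archimedeanization argument sketched above, is what ties the hypothesis ``for any $k>k_0$'' to the conclusion. Everything in (ii) is comparatively routine once one notices that the restriction $\beta\in\{e,el\}$ is present precisely to guarantee left injectivity of both $\alpha$ and $\beta$, which is exactly what makes nuclearity descend to the subsystems $\CS_k$.
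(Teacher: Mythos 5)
Your proof is correct, and it rests on the same pillars as the paper's: Proposition \ref{proposition-max}, Corollary \ref{corollary-er-c} and Remark \ref{remark-04} for part (i), and left injectivity of both $\alpha$ and $\beta$ for part (ii), where your intersection-of-cones argument is exactly the paper's right-hand diagram. The only real divergence is in how (i) is organized. The paper splits into two cases: for $\alpha\in\{min,e,el\}$ it never invokes the inductive-limit description of $\CS\otimes_{\alpha}\CT$ at all, but argues directly that any $u\in M_n(\CS\otimes_{\alpha}\CT)^{+}$ lies in $M_n(\CS_k\otimes_{\alpha}\CT)^{+}$ for some large $k$ (left injectivity of $\alpha$), hence in $M_n(\CS_k\otimes_{\beta}\CT)^{+}$ (nuclearity of $\CS_k$), and is carried into $M_n(\CS\otimes_{\beta}\CT)^{+}$ by the completely positive map $(i^{(k)}\otimes\Id_{\CT})_n$ (functoriality of $\beta$); only for $\alpha\in\{er,c,max\}$ does it pass to inductive limits, exactly as you do. Your uniform treatment of all six structures is conceptually tidier, but it is precisely what forces you to address tail-insensitivity of the Section 2 construction; your justification of that point is sound (the two sequences share the same underlying spaces $\CS_k\otimes\CT$ and connecting maps, so $S'$, $K$ and the eventually-defined cones $D_n$ literally coincide once the matrix cones agree for $k>k_0$), and it is worth noting that the paper itself relies on the same fact, without comment, in its $\{er,c,max\}$ case. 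Your alternative Archimedean argument --- pulling $v+\varepsilon e$ back via Remark \ref{remark-1}(iii) to stage $k>k_0$, applying nuclearity there, pushing forward, and letting $\varepsilon\to 0$ --- is also correct and has the merit of bypassing the tail-insensitivity discussion entirely.
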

\begin{proof}
(i) If $\alpha\in\{min,e,el\}$,
then $\alpha$ is left injective and thus $\CS_k\otimes_{\alpha}\CT\subseteq\CS\otimes_{\alpha}\CT$ for any $k\in\BN$.
We only need to prove that $\id_{\CS\otimes\CT}$ in the following left diagram is completely positive.
For any $u\in (\CS\otimes_{\alpha}\CT)^+$,
there exists large enough k such that $u\in(\CS_k\otimes_{\alpha}\CT)^+=(\CS_k\otimes_{\beta}\CT)^+$.
Then $u\in (\CS\otimes_{\beta}\CT)^+$ and thus
$\id_{\CS\otimes\CT}$ is positive.
We can prove it is completely positive similarly.
If $\alpha\in\{er,c,max\}$,
by Proposition \ref{proposition-max} and Corollary \ref{corollary-er-c},
we get that $\CS\otimes_{\alpha}\CT = \CS\otimes_{\beta}\CT$.
(ii) is clear from the following right diagram since $\alpha$ and $\beta$ are both left injective.
$$\xymatrix{
\CS\otimes_{\alpha}\CT   \ar[rr]^{\id_{\CS\otimes\CT}}& & \CS\otimes_{\beta}\CT &&
\CS\otimes_{\alpha}\CT   \ar[rr]^{\id_{\CS\otimes\CT}}& & \CS\otimes_{\beta}\CT
\\
\CS_k\otimes_{\alpha}\CT  \ar@{=}[rr] \ar@{^{(}->}[u] & &  \CS_k\otimes_{\beta}\CT\ar[u]_{i^{(k)}\otimes\id_{\CT}} &&
\CS_k\otimes_{\alpha}\CT  \ar[rr]^{\id_{\CS_k\otimes\CT}} \ar@{^{(}->}[u] & &  \CS_k\otimes_{\beta}\CT\ar@{^{(}->}[u]
}$$
\end{proof}

\section{Example}

\begin{example}
Recall that a UHF algebra (uniformly hyperfinite algebra) is a unital $C^\ast$-algebra $\CA$
such that $\CA = \overline{\cup_{n=1}^{\infty}\CA_n}$,
where $\{\CA_n:n\in\BN\}$ is an increasing sequence of finite dimensional simple $C^\ast$-subalgebras
containing the unit of $\CA$ (see \cite{Murphy1}).
Let $n,d\in\BN$.
We denote by
$$\varphi: M_n \rightarrow M_{dn}, \ \   x\mapsto
\left(
\begin{array}{ccc}
x &   & 0 \\
  & \cdots &  \\
0 &   & x
\end{array}
\right) $$
the canonical map from $M_n$ to $M_{dn}$.
Note that any finite dimensional simple $C^\ast$-algebra is $\ast$-isomorphic to some $M_k$.
Then $\CA$ is in fact the $C^\ast$-algebra inductive limit of a sequence
$\{M_{n_k},\varphi_k\}$ with $n_k|n_{k+1}$ and $\varphi_k$ is the canonical map from $M_{n_k}$ to $M_{n_{k+1}}$.

Let $\gamma:\BN\setminus\{0\}\rightarrow\BN\setminus\{0\}$ be a function.
Define $\gamma!:\BN\setminus\{0\}\rightarrow\BN\setminus\{0\}$ by
$$\gamma!(n)=\gamma(1)\gamma(2)\dots\gamma(n).$$
Let $\varphi_n:M_{\gamma!(n)}\rightarrow M_{\gamma!(n+1)}$ be the canonical map.
Then any $\gamma$ determines a $C^\ast$-algebra inductive sequence $\{M_{\gamma!(n)},\varphi_n\}$
and thus a UHF algebra $\CA_\gamma$.

Now we consider the operator system inductive limit of this sequence.
Let $\CS_\gamma=\lim\limits_{\longrightarrow}\{M_{\gamma!(n)},\varphi_n\}$.
Then there exists $\sigma:\CS_\gamma\rightarrow \CA_\gamma$
which is a complete order monomorphism.
In fact,
if $\varphi^{(n)}:M_{\gamma!(n)}\rightarrow \CA_\gamma$ is the canonical $\ast$-homomorphism,
there exists a unique unital completely positive map $\sigma$ from $\CS_\gamma$ to $\CA_\gamma$
such that the following diagram commutes by Proposition \ref{proposition-property}.
$$\xymatrix{
& & \CA_\gamma
\\
M_{\gamma!(n)}      \ar[rr]^{\tau^{(k)}}     \ar[rru]^{\varphi^{(n)}}
& & \CS_\gamma \ar@{.>}[u]_{\sigma}
}$$
By the constructions of $C^\ast$-algebra inductive limit,
we see that $\CS_\gamma$ is in fact a linear subspace of $\CA_\gamma$ and $\sigma$ is the inclusion map.
Then it is injective.
Now we prove that $\sigma$ is a complete order monomorphism.
Let $l\in\BN$ and $\sigma_{l}(v)\in M_l^{+}(\sigma(\CS_\gamma))$.
Since $\sigma(\CS_\gamma)=\cup_{n\in\BN}\varphi^{(n)}(M_{\gamma!(n)})$,
there exists large enough $k\in\BN$ such that
$\sigma_{l}(v)\in M_l^{+}(\varphi^{(k)}(M_{\gamma!(k)}))$.
Then $v\in M_l^{+}(\CS_\gamma)$ since $\tau^{(k)}$ is the inclusion.
\end{example}

\begin{example}
Recall that
$\CS_{\infty}$
is the smallest closed operator subsystem in $C^{\ast}(\mathbb{F}^{\infty})$ containing all the generators (see \cite{Kavruk-Paulsen2}).
$\CS_n$ is the 2n+1 dimensional operator system in $C^{\ast}(\mathbb{F}^{n})$ containing the generators.
There exist unital completely positive maps
$i^{(n)}:\CS_n\rightarrow \CS_\infty$ and $p^{(n)}:\CS_\infty\rightarrow\CS_n$ such that
$p^{(n)}\circ i^{(n)}$ is the identity map on $\CS_n$ for any $n\in\BN$.
Then $i^{(n)}$ is a complete order monomorphism and thus $\CS_n\subseteq \CS_\infty$.
Similarly, $\CS_n\subseteq\CS_{n+1}$ in a canonical way.

Now we define
$$\CS_\varepsilon=\text{span}\{g_i^{\ast},e,g_i: i\in\BN\}\subseteq C^{\ast}(\mathbb{F}^{\infty}).$$
It is clear that $\CS_{\infty}=\bar{\CS}_\varepsilon$ and
$\CS_{\varepsilon}=\lim\limits_{\longrightarrow}\{\CS_n,i_n,n\in\BN\}$,
that is,
$\CS_{\varepsilon}=\cup_{n\in\BN}\CS_n$.
Then $\CS_\varepsilon$ is $(min,er)$-nuclear by \cite[Proposition 9.9]{Kavruk-Paulsen2} and Theorem \ref{theorem-nuclear}.
By the similar methods for \cite[Theorem 9.13]{Kavruk-Paulsen2},
we could get that Kirchberg conjecture has an affirmative answer if and only if $\CS_\varepsilon$ is $(el,c)$-nuclear.
\end{example}
\bigskip

\end{document}